\documentclass{amsart}

\usepackage{amssymb}
\usepackage{eucal}
\usepackage{color}
\usepackage{arydshln}
\usepackage{verbatim}

\usepackage{multirow,bigdelim}
\usepackage{amsfonts}
\usepackage{dsfont}
\usepackage{amsmath}
\usepackage{mathrsfs}
\usepackage{todonotes}
\usepackage{hyperref}
\usepackage{multimedia}
\usepackage{graphicx}
\usepackage{indentfirst}
\usepackage{bm}
\usepackage{amsthm}
\usepackage{mathrsfs}
\usepackage{latexsym}
\usepackage{amsmath}
\usepackage{amssymb}
\usepackage{microtype}
\usepackage{relsize}
\usepackage{hyperref}
\usepackage{tikz-cd}
\usepackage[backend=biber,style=numeric,autocite=superscript]{biblatex}
\allowdisplaybreaks[4]

\numberwithin{equation}{subsection}

\DeclareSymbolFont{SY}{U}{psy}{m}{n}
\DeclareMathSymbol{\emptyset}{\mathord}{SY}{'306}

\theoremstyle{plain}

\newtheorem{theorem}{Theorem}[section]

\newtheorem{lemma}[theorem]{Lemma}
\newtheorem{proposition}[theorem]{Proposition}
\theoremstyle{definition}
\newtheorem{definition}[theorem]{Definition}

\numberwithin{equation}{section}

\makeatletter
\@namedef{subjclassname@2020}{\textup{2020} Mathematics Subject Classification}
\makeatother
\begin{document}
	
	\title[Riccati Equations and the Third Problem of Halmos]{Riccati Equations and the Third Problem of Halmos}

	\author{ KUI JI and ZHENYUE WANG }
	
	\thanks{}
	
	\subjclass[2010]{Primary 47A15; Secondary 47B47, 47B32, 47B36}

	\curraddr[ ]{School of Mathematical Sciences, Hebei Normal University,
		Shijiazhuang, Hebei 050024, China}

	\email[]{}

	\email[K.Ji]{jikui@hebtu.edu.cn} \email[Z.Y.Wang]{wangzhenyuemath@126.com}
	
	\begin{abstract}

	In \cite{fang2026invariantII}, Fang, Hou, Jiang, and Zhang
	reduced Halmos's third problem to invertible operators
	with conjugate bi-geometric form. An operator has such a
	form if, with respect to an orthogonal decomposition,
	it admits a representation
	\[
	T=
	\begin{pmatrix}
		A&C\\
		0&B
	\end{pmatrix},
	\qquad A^*,B\in B_1(O_\delta),
	\]
	where \(O_\delta=\{z\in\mathbb C:|z|<\delta\}\) for some
	\(\delta>0\), and \(B_1(O_\delta)\) denotes the rank-one
	Cowen--Douglas class.
	
	We restrict our attention to the case \(B=A^*\) with
	strictly positive coupling \(C\), and settle Halmos's
	third problem for this class of operators. More precisely,
	we prove that every such operator \(T\) is invertible
	and that \(T^{-1}\) has a nontrivial closed invariant
	subspace. This result contributes to the study of Halmos's third problem within the conjugate bi-geometric framework.
	The proof combines monotone solutions of operator
	Riccati equations with strong operator convergence
	to construct invariant subspaces for associated
	Hamiltonian operator matrices.

	\end{abstract}
	\maketitle
	
	\noindent \textbf{Keywords}: Invariant subspaces;
	Halmos's third problem;
	Operator Riccati equations;
	Hamiltonian operator matrices;
	Cowen--Douglas operators.

\section{Introduction}

Let \(\mathcal H\) be a complex separable infinite-dimensional
Hilbert space, and let \(\mathcal B(\mathcal H)\) denote the
algebra of bounded linear operators on \(\mathcal H\).
Throughout this paper, all invariant subspaces are understood
to be closed. An operator is called \emph{intransitive} if it
has an invariant subspace other than \(\{0\}\) and the whole
space. The invariant subspace problem asks whether every
operator in \(\mathcal B(\mathcal H)\) is intransitive.
Closely related to this question is the third of the ten
problems posed by Halmos in \cite{halmos1970ten}:

\medskip
\noindent
\textbf{Halmos's third problem.}
\emph{If an intransitive operator is invertible, is its inverse
	also intransitive?}
\medskip

The difficulty is that invariance under an invertible operator
does not generally imply invariance under its inverse.
Indeed, if \(\mathcal M\) is invariant under \(T\), then
\(T\mathcal M\subseteq\mathcal M\), whereas invariance of
\(\mathcal M\) under \(T^{-1}\) would require
\(T\mathcal M=\mathcal M\).
Thus, an invariant subspace of \(T\) need not itself provide
an invariant subspace of \(T^{-1}\).

A reduction theorem of Fang, Hou, Jiang, and Zhang
\cite{fang2026invariantII} provides a concrete framework
for studying this problem. They proved that Halmos's third
problem is equivalent to its restriction to invertible
operators of the form
\[
T=
\begin{pmatrix}
	A&C\\
	0&B
\end{pmatrix}
\in\mathcal B(\mathcal H_1\oplus\mathcal H_2),
\qquad
A^*,B\in B_1(O_\delta),
\]
where
\[
O_\delta=\{z\in\mathbb C:|z|<\delta\},
\qquad \delta>0,
\]
and \(B_1(O_\delta)\) denotes the rank-one Cowen--Douglas
class. Such operators are called \emph{conjugate bi-geometric
	operators}. The reduction retains the full difficulty of
Halmos's third problem while supplying analytic structure
through the diagonal entries. In particular, \(A\) is
injective with closed range of codimension one, and \(B\)
is surjective with a one-dimensional kernel. Invertibility of the full operator requires the coupling to connect these two defect spaces.

Within this framework, affirmative results have been
obtained under bundle-preserving conditions and suitable
power-growth assumptions; see
\cite{j2026geometric}.

In this paper, we restrict our attention to conjugate
bi-geometric operators with \(B=A^*\) and strictly positive
coupling \(C\). We settle Halmos's third problem for this
entire class of operators. More precisely, for every
\(A^*\in B_1(O_\delta)\) and every strictly positive \(C\),
the corresponding block operator \(T\) is invertible,
and its inverse \(T^{-1}\) has a nontrivial closed invariant
subspace. Thus, the case determined by the structural
conditions \(B=A^*\) and \(C>0\) admits a complete
affirmative solution.

In view of this reduction, our result contributes to the study of Halmos's third problem within the conjugate bi-geometric framework. Our main theorem is as follows.

\medskip
\noindent
\textbf{Main Theorem.}
Let
\[
T=
\begin{pmatrix}
	A&C\\
	0&A^*
\end{pmatrix}
\in\mathcal B(\mathcal H\oplus\mathcal H),
\]
where \(A^*\in B_1(O_\delta)\) for
	some \(\delta>0\). If \(C>0\), then \(T^{-1}\)
	is intransitive.
\medskip
Here \(C>0\) means that \(\langle Cx,x\rangle>0\)
for every nonzero \(x\in\mathcal H\);
no uniform positive lower bound is assumed.

The assumptions \(B=A^*\) and \(C>0\) are motivated
by the connection between the inverse operator and
Hamiltonian operator matrices.
For a unit vector \(e\in\ker A^*\), strict positivity gives
\(\langle Ce,e\rangle>0\), which ensures that \(T\)
is invertible.
The adjoint relation between the diagonal entries,
together with the self-adjointness of the coupling,
provides the required block symmetry in the inverse formula.
Positivity of the coupling then allows \(-iT^{-1}\)
to be represented, up to unitary equivalence, by a
Hamiltonian operator matrix with nonnegative off-diagonal
entries.
These sign conditions permit the use of global positive
Riccati solutions and their order properties.
The theorem requires neither compactness of the coupling
nor power-growth estimates on the diagonal entries.

The spectral properties of Hamiltonian operator matrices
have been studied from several perspectives.
Liu, Jin, and Eerdun \cite{LiuJinEerdu2024}
proved the symplectic self-adjointness of upper triangular
Hamiltonian operator matrices defined on diagonal domains
and obtained criteria for the emptiness of their residual
spectra.

Our approach connects the invariant subspace problem
with methods from differential equations, control theory,
and symplectic geometry.
Lawson and Lim \cite{LawsonLim2006SymplecticSemigroup}
developed a symplectic semigroup approach to operator
Riccati equations and established a global existence
theorem for positive solutions.
We combine their existence theorem with monotonicity
and strong operator convergence to construct invariant
subspaces from the evolution of graph subspaces.
We consider bounded Hamiltonian operators of the form
\[
H=
\begin{pmatrix}
	K&Q\\
	P&-K^*
\end{pmatrix},
\qquad
K\in\mathcal B(\mathcal H),\quad P,Q\ge0,
\]
and prove that each such operator has a nontrivial
invariant subspace. To this end, we study the
differential Riccati equation
\[
Y'(t)=K^*Y(t)+Y(t)K-Y(t)QY(t)+P,
\qquad Y(0)=0.
\]
Its solution exists for all \(t\ge0\), is positive,
and is nondecreasing in the operator order.
Moreover, the graph of \(-Y(t)\) evolves according to
\[
\operatorname{Graph}(-Y(t))
=e^{-tH}(\mathcal H\oplus\{0\}).
\]

The limiting argument does not require the Riccati
trajectory \(Y(t)\) to remain uniformly bounded.
Instead, we use the bounded transforms
\[
R_t=(I+Y(t))^{-1}.
\]
These operators form a decreasing family of positive
contractions and hence converge strongly to an operator
\(R\) satisfying \(0\le R\le I\).
We then show that
\[
\mathcal M=
\left\{
\begin{pmatrix}
	Rx\\
	-(I-R)x
\end{pmatrix}
:\ x\in\mathcal H
\right\}
\]
is a nontrivial closed \(H\)-invariant subspace.
This construction remains valid even when the Riccati
trajectory does not converge to a bounded solution of
the associated algebraic Riccati equation.

Finally, the explicit inverse formula for \(T\) shows
that \(-iT^{-1}\) is unitarily equivalent to a Hamiltonian
operator of the preceding form, with both off-diagonal
entries positive. The invariant subspace obtained
from the Riccati flow therefore yields a nontrivial
invariant subspace of \(T^{-1}\).
This establishes a connection between positivity of
the coupling in the conjugate bi-geometric model and
the order structure of operator Riccati equations.

The paper is organized as follows.
Section~2 recalls the operator-theoretic preliminaries.
Section~3 develops the Riccati construction of invariant
subspaces for Hamiltonian operators and applies it to
prove the main theorem.

\section{Preliminaries}

Throughout this paper, $\mathcal H$ denotes a complex separable
infinite-dimensional Hilbert space, and $\mathcal B(\mathcal H)$
denotes the algebra of all bounded linear operators on $\mathcal H$.
We assume that the inner product is linear in the first variable.

In \cite{article2}, M.~J.~Cowen and R.~G.~Douglas introduced a class of operators, denoted by $B_n(\Omega)$, whose eigenvalues with constant multiplicity $n$ are taken over a domain $\Omega\subset\mathbb{C}$.
The Cowen--Douglas class $B_n(\Omega)$ of operators of rank $n$ is defined as follows \cite{article2}:
\begin{enumerate}
	\item $\Omega\subset \sigma(T)=\{z\in\mathbb{C}\mid T-z \text{ is not invertible}\}$;
	\item $\operatorname{ran}(T-z)=\mathcal{H}$ for all $z\in\Omega$;
	\item $\bigvee_{z\in\Omega}\ker(T-z)=\mathcal{H}$;
	\item $\dim\ker(T-z)=n$ for all $z\in\Omega$.
\end{enumerate}

In particular, let
\[
O_\delta=\{z\in\mathbb C:|z|<\delta\},
\qquad \delta>0,
\]
and suppose that $A^*\in B_1(O_\delta)$. Since $0\in O_\delta$,
we have
\[
\operatorname{ran}A^*=\mathcal H,
\qquad
\dim\ker A^*=1.
\]
It follows that $A$ is injective and has closed range. If $e$ is
a unit vector spanning $\ker A^*$, then
\[
\operatorname{ran}A=(\ker A^*)^\perp=e^\perp.
\]
Consequently, $A$ is bounded below and $A^*A$ is invertible. The
operator
\[
A_{\mathrm L}^{-1}:=(A^*A)^{-1}A^*
\]
is a bounded left inverse of $A$, and
\[
A_{\mathrm L}^{-1}A=I,
\qquad
AA_{\mathrm L}^{-1}=I-e\otimes e,
\]
where
\[
(e\otimes e)x=\langle x,e\rangle e,
\qquad x\in\mathcal H.
\]

\begin{definition}\label{def:1.1}\cite{fang2026invariantII}
	Let $T\in\mathcal{B}(\mathcal{H})$. We say that $T$ has a \textit{conjugate bi-geometric form}, if there is a nontrivial subspace $M$ such that
	\[
	T=
	\begin{pmatrix}
		T_1 & T_{12}\\
		0 & T_2
	\end{pmatrix}\!\!\!\!
	\begin{matrix}
		M\\
		\ \ M^{\perp}
	\end{matrix}
	\]
	where $T_1^*, T_2\in B_1(O_\delta)$ and $O_\delta\triangleq\{z\in\mathbb{C};\,|z|<\delta\}$ for some $\delta>0$.
\end{definition}

\begin{definition}\cite{IJMJjiang2025cowen}
	Let \(A\in B_{1}(\Omega)\) with \(0\in \Omega\). Then there exists an orthonormal basis \(\{e_{n}\}_{n=0}^{\infty}\) of \(\mathcal{H}\) such that 
	\(e_{n-1}\in \ker A^{n}\ominus\ker A^{n-1}\) for \(n\ge 1\). If \(AB=I\) and 
	\[
	\langle Be_{j},e_{i}\rangle=\begin{cases}
		0, & i=0,\\[2mm]
		b_{i,\,j}, & i\ge 1,
	\end{cases}
	\qquad j\ge 0,
	\]
	then \(B\) is called the \textit{standard right inverse} of \(A\).
	Dually, let $A^*\in B_1(\Omega)$ with $0\in\Omega$.
	If $C^*$ is the standard right inverse of $A^*$,
	then $C$ is called the \textit{standard left inverse} of $A$.
\end{definition}

\begin{definition}\cite{LiuJinEerdu2024}
	An operator matrix of the form
\[
H=
\begin{pmatrix}
	K&Q\\
	P&-K^*
\end{pmatrix}
\in\mathcal B(\mathcal H\oplus\mathcal H),
\]
where $K\in\mathcal B(\mathcal H)$ and $P=P^*$, $Q=Q^*$,
is called a Hamiltonian operator matrix.
\end{definition}

If
\[
J=
\begin{pmatrix}
	0&-I\\
	I&0
\end{pmatrix},
\]
then $H$ satisfies
\[
H^*J+JH=0.
\]
In Section~3, we consider the differential Riccati equation
\[
Y'(t)
=
K^*Y(t)+Y(t)K-Y(t)QY(t)+P,
\qquad Y(0)=0,
\]
where $P,Q\ge0$. The connection between this equation and the
evolution generated by the Hamiltonian operator matrix $H$ will
be used to construct an invariant subspace.

\section{Main result}

We begin with a standard existence and uniqueness theorem
for linear differential equations in Banach spaces.

\begin{lemma}\cite{Driver2004AnalysisTools}\label{yinli2.01}
	Let \(X\) be a real or complex Banach space, let \(a>0\), and let
	\[
	\mathcal A\in C([0,a],\mathcal B(X)),
	\qquad f\in C([0,a],X),
	\]
	where the continuity of \(\mathcal A\) is understood in the operator norm. Then, for every \(x_0\in X\), the initial value problem
	\[
	\begin{cases}
		x'(t)=\mathcal A(t)x(t)+f(t),\\
		x(0)=x_0
	\end{cases}
	\]
	has a unique solution
	\[
	x\in C^1([0,a],X)
	\]
	on the whole interval \([0,a]\). Here the derivative is defined in the norm of \(X\), and at the endpoints one-sided derivatives are taken.

\end{lemma}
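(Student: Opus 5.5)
The plan is the classical Picard iteration, applied in the Banach space \(C([0,a],X)\) with a weighted norm so that the contraction holds on the whole interval at once. First I would show that on \([0,a]\) the initial value problem is equivalent to the integral equation
\[
x(t)=x_0+\int_0^t\bigl(\mathcal A(s)x(s)+f(s)\bigr)\,ds,\qquad t\in[0,a],
\]
for \(x\in C([0,a],X)\). Here the integral is the Riemann integral of a continuous \(X\)-valued function. One direction uses the fundamental theorem of calculus for continuous Banach-valued integrands: the right-hand side is then \(C^1\) with derivative \(\mathcal A(t)x(t)+f(t)\), and the endpoints are handled by one-sided derivatives. The other direction follows from the fact that a \(C^1\) function with zero derivative is constant. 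To justify this, apply bounded linear functionals and the scalar mean value theorem, and then use Hahn--Banach. The integrand \(s\mapsto \mathcal A(s)x(s)\) is continuous because \(\|\mathcal A(s)x(s)-\mathcal A(s_0)x(s_0)\|\le\|\mathcal A(s)-\mathcal A(s_0)\|\,\|x(s)\|+\|\mathcal A(s_0)\|\,\|x(s)-x(s_0)\|\), using norm continuity of \(\mathcal A\).

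Next I would define \(\Phi:C([0,a],X)\to C([0,a],X)\) by letting \(\Phi x\) be the right-hand side of the integral equation. Put \(M=\sup_{t\in[0,a]}\|\mathcal A(t)\|<\infty\), which is finite by compactness of \([0,a]\). Equip the space with the Bielecki norm \(\|x\|_M=\sup_t e^{-2Mt}\|x(t)\|\), which is equivalent to the sup norm and therefore complete. Then
\[
\|(\Phi x-\Phi y)(t)\|\le M\int_0^t e^{2Ms}\|x-y\|_M\,ds\le \tfrac12 e^{2Mt}\|x-y\|_M .
\]
Hence \(\Phi\) is a contraction with constant \(1/2\). The Banach fixed point theorem then gives a unique fixed point, and by the first step this is the unique \(C^1\) solution on all of \([0,a]\). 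As an alternative, I could iterate and show \(\|\Phi^n x-\Phi^n y\|_\infty\le (Ma)^n/n!\,\|x-y\|_\infty\), so that some power of \(\Phi\) is a contraction.

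For uniqueness there is a direct check, though it is already covered by the fixed point argument. If \(x,y\) are two solutions, then \(w=x-y\) satisfies \(\|w(t)\|\le M\int_0^t\|w\|\), and Gronwall's inequality gives \(w\equiv0\).

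I do not expect any step to be a real obstacle. The only point needing care is the vector-valued calculus in the first step, namely the fundamental theorem of calculus and the zero-derivative-implies-constant fact in a general Banach space. Working on the full interval with the weighted norm avoids any need to patch local solutions together.
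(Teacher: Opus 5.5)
Your proposal is correct: reducing to the Volterra integral equation and getting existence and uniqueness from the Bielecki-norm contraction, or from the bound $\|\Phi^n x-\Phi^n y\|_\infty\le \frac{(Ma)^n}{n!}\|x-y\|_\infty$, is the standard argument. The paper does not prove this lemma itself but cites Driver, where it is proved in essentially this way (Picard iteration, i.e.\ a Neumann series for $I-\Lambda$ with $(\Lambda y)(t)=\int_0^t\mathcal A(s)y(s)\,ds$), so your route matches the intended one.
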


We shall also need the invertibility of the solution to an
operator-valued linear initial value problem.

\begin{lemma}\label{yinli6}
Let
\[
M\in C([0,\infty),\mathcal B(\mathcal H)),
\]
where the continuity is understood in the operator norm. Then the initial value problem
\[
U'(t)=-M(t)U(t),\qquad U(0)=I
\]
has a unique solution \(U(t)\) that is continuously differentiable in the operator norm. For every \(t\ge 0\), \(U(t)\) is invertible, and
\[
\bigl(U(t)^{-1}\bigr)'=U(t)^{-1}M(t).
\]
\end{lemma}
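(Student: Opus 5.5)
Existence and uniqueness follow from Lemma~\ref{yinli2.01} applied in the Banach space \(X=\mathcal B(\mathcal H)\), carried out on each finite interval. For fixed \(a>0\), define \(\mathcal A(t)\in\mathcal B(X)\) by \(\mathcal A(t)V=-M(t)V\). Then
\[
\|\mathcal A(t)-\mathcal A(s)\|_{\mathcal B(X)}\le\|M(t)-M(s)\|,
\]
so \(\mathcal A\in C([0,a],\mathcal B(X))\) in norm. Taking \(f=0\) and \(x_0=I\), Lemma~\ref{yinli2.01} gives a unique \(U\in C^1([0,a],\mathcal B(\mathcal H))\) solving \(U'=-MU\), \(U(0)=I\). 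Uniqueness on each \([0,a]\) shows that the solutions for different \(a\) agree on their common interval. Hence they patch together to a single norm-\(C^1\) solution on \([0,\infty)\), and this solution is unique.

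For invertibility, I would construct the candidate inverse directly rather than argue by perturbation. Apply Lemma~\ref{yinli2.01} again, now with \(\mathcal A(t)V=VM(t)\) (right multiplication), which is also norm-continuous. This gives a unique \(V\in C^1\) with \(V'=VM\), \(V(0)=I\). Then compute, by the product rule in operator norm:
\[
(VU)'=V'U+VU'=VMU-VMU=0 .
\]
Since \(V(0)U(0)=I\), this gives \(V(t)U(t)=I\) for all \(t\). So \(U(t)\) has a left inverse.

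The remaining step, and the only genuine subtlety, is the right inverse. Here \(\mathcal H\) is infinite-dimensional, so a left inverse does not automatically give invertibility. I would set \(W(t)=U(t)V(t)\) and compute
\[
W'=U'V+UV'=-MUV+UVM=-MW+WM,
\qquad W(0)=I .
\]
The constant function \(W\equiv I\) also solves this linear equation, since \(-MI+IM=0\). The linear map \(V\mapsto -M(t)V+VM(t)\) is norm-continuous in \(t\), so the uniqueness part of Lemma~\ref{yinli2.01} forces \(W(t)=I\) for all \(t\). Thus \(U(t)V(t)=I\), and \(U(t)\) is invertible with \(U(t)^{-1}=V(t)\).

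The derivative formula is then exactly the defining equation of \(V\):
\[
\bigl(U(t)^{-1}\bigr)'=V'(t)=V(t)M(t)=U(t)^{-1}M(t).
\]
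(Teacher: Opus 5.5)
Your proof is correct and follows the paper's argument: you build $V$ from $V'=VM$, $V(0)=I$, get $VU=I$ from $(VU)'=0$, and get $UV=I$ from the commutator equation $W'=-MW+WM$, $W(0)=I$. The only difference is cosmetic. You conclude $W\equiv I$ from the uniqueness clause of Lemma~\ref{yinli2.01}, applied to the norm-continuous map $Z\mapsto -M(t)Z+ZM(t)$, whereas the paper shows $E=UV-I$ vanishes by an explicit iteration estimate $\|E(t)\|\le C(2mt)^n/n!$.
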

\begin{proof}
Let
\[
X=\mathcal B(\mathcal H).
\]
For each \(t\ge0\), define linear operators on \(X\) by
\[
\mathcal A(t)Z=-M(t)Z,\qquad
\mathcal C(t)Z=ZM(t),
\qquad Z\in X.
\]
Then
\[
\|\mathcal A(t)\|_{\mathcal B(X)}\le\|M(t)\|,
\qquad
\|\mathcal C(t)\|_{\mathcal B(X)}\le\|M(t)\|.
\]
Moreover,
\[
\begin{aligned}
	\|\mathcal A(t)-\mathcal A(s)\|_{\mathcal B(X)}
	&\le\|M(t)-M(s)\|,\\
	\|\mathcal C(t)-\mathcal C(s)\|_{\mathcal B(X)}
	&\le\|M(t)-M(s)\|.
\end{aligned}
\]
Thus, both \(\mathcal A(t)\) and \(\mathcal C(t)\) are continuous
in the operator norm.

Applying Lemma~\ref{yinli2.01} with \(f=0\) and \(x_0=I\),
we obtain, for every \(a>0\), unique solutions to the initial
value problems
\[
U'(t)=-M(t)U(t),\qquad U(0)=I,
\]
and
\[
V'(t)=V(t)M(t),\qquad V(0)=I,
\]
that are continuously differentiable in the operator norm
on \([0,a]\). By uniqueness, the solutions on different
finite intervals agree on their overlaps. They therefore
define unique solutions \(U(t)\) and \(V(t)\) on
\([0,\infty)\), both continuously differentiable in the
operator norm.

A direct calculation gives
\[
	(V(t)U(t))'
	=V'(t)U(t)+V(t)U'(t)
	=V(t)M(t)U(t)-V(t)M(t)U(t)=0.
\]
Since \(V(0)U(0)=I\), it follows that
\[
V(t)U(t)=I.
\]
Set
\[
E(t)=U(t)V(t)-I.
\]
Then
\[
\begin{aligned}
	E'(t)
	&=U'(t)V(t)+U(t)V'(t)\\
	&=-M(t)U(t)V(t)+U(t)V(t)M(t)\\
	&=-M(t)\bigl(E(t)+I\bigr)
	+\bigl(E(t)+I\bigr)M(t)\\
	&=-M(t)E(t)+E(t)M(t).
\end{aligned}
\]
Since \(E(0)=0\), integration yields
\[
E(t)=\int_0^t
\bigl(E(u)M(u)-M(u)E(u)\bigr)\,du.
\]
Fix an arbitrary finite interval \([0,a]\). By continuity,
\[
m=\sup_{0\le s\le a}\|M(s)\|<\infty,
\qquad
C=\sup_{0\le s\le a}\|E(s)\|<\infty.
\]
Hence
\[
\|E(t)\|\le 2m\int_0^t\|E(s)\|\,ds,
\qquad 0\le t\le a.
\]
Using \(\|E(s)\|\le C\), we first obtain
\[
\|E(t)\|\le C(2mt).
\]
Iterating this estimate gives
\[
\|E(t)\|
\le C\frac{(2mt)^n}{n!},
\qquad n=0,1,2,\dots.
\]
For each fixed \(t\), we have
\[
\frac{(2mt)^n}{n!}\longrightarrow0
\qquad\text{as }n\to\infty.
\]
Therefore, \(\|E(t)\|=0\), and hence \(E(t)=0\).
Consequently,
\[
U(t)V(t)=I,\qquad 0\le t\le a.
\]
Together with \(V(t)U(t)=I\), this shows that \(U(t)\)
is invertible and
\[
U(t)^{-1}=V(t).
\]
Since \(a>0\) was arbitrary, \(U(t)\) is invertible for
every \(t\ge0\), and its inverse satisfies
\[
\bigl(U(t)^{-1}\bigr)'=U(t)^{-1}M(t).
\]
	
\end{proof}

We next recall the global existence theorem of Lawson and Lim
for positive solutions of operator Riccati equations.

\begin{lemma}\cite{LawsonLim2006SymplecticSemigroup}\label{yinli8}
		Let $\mathcal H$ be a real Hilbert space, and let
	\[
	A,R,S:[t_0,\infty)\longrightarrow\mathcal B(\mathcal H)
	\]
	be norm-continuous operator-valued functions.
	Suppose that
	\[
	R(t)\ge 0,\qquad S(t)\ge 0
	\quad\text{for all }t\ge t_0,
	\]
	and that $K_0\in\mathcal B(\mathcal H)$ satisfies $K_0\ge 0$.
	Then the initial value problem
	\[
	\dot K(t)
	=
	R(t)+A(t)K(t)+K(t)A(t)^*-K(t)S(t)K(t),
	\qquad K(t_0)=K_0,
	\]
	admits a solution
	\[
	K\in C^1\bigl([t_0,\infty),\mathcal B(\mathcal H)\bigr)
	\]
	such that $K(t)\ge 0$ for every $t\ge t_0$.
	Here $A(t)^*$ denotes the adjoint on the real Hilbert space
	$\mathcal H$, and positivity means positive self-adjointness.
\end{lemma}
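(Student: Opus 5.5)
The plan is to combine local existence for the quadratic ODE with two a priori order estimates, a lower bound $K(t)\ge0$ and an upper bound $K(t)\le Z(t)$ by a solution of a linear equation, so that no finite-time blow-up can occur. The right-hand side
\[
F(t,K)=R(t)+A(t)K+KA(t)^*-KS(t)K
\]
is norm-continuous in $t$ and locally Lipschitz in $K$ on $\mathcal B(\mathcal H)$, uniformly on compact $t$-intervals. The Picard iteration therefore gives a unique $C^1$ solution on $[t_0,t_0+h]$. The length $h>0$ depends only on a bound for $\|K\|$ at the initial time and on the suprema of $\|A\|,\|R\|,\|S\|$ over a compact interval. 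Consider the maximal interval $[t_0,\tau)$ of existence. Standard continuation shows that if $\tau<\infty$, then $\|K(t)\|$ is unbounded as $t\uparrow\tau$. Moreover, $K(t)^*$ solves the same equation with the same initial value $K_0=K_0^*$, because $R$ and $S$ are self-adjoint. Uniqueness then gives $K(t)=K(t)^*$.

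For positivity, write the equation as
\[
\dot K=R+M(t)K+KM(t)^*,\qquad M(t)=A(t)-\tfrac12 K(t)S(t).
\]
Here $M$ is norm-continuous on $[t_0,\tau)$. By Lemma~\ref{yinli6} (applied to $-M$, shifted to start at $t_0$; its proof works verbatim on a real Hilbert space and on $[t_0,\tau)$), let $U$ solve $U'=MU$ with $U(t_0)=I$. Then $U(t)$ is invertible, with $(U^{-1})'=-U^{-1}M$. Differentiating $U(t)^{-1}K(t)U(t)^{-1*}$ gives the variation-of-constants formula
\[
K(t)=U(t)K_0U(t)^*+\int_{t_0}^t U(t)U(s)^{-1}R(s)\bigl(U(t)U(s)^{-1}\bigr)^*\,ds .
\]
Each term is a positive operator, so $K(t)\ge0$.

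For the upper bound, let $Z$ solve the linear Lyapunov equation $\dot Z=R+AZ+ZA^*$ with $Z(t_0)=K_0$. This is a linear equation on the Banach space $\mathcal B(\mathcal H)$ with norm-continuous coefficient $Z\mapsto AZ+ZA^*$. By Lemma~\ref{yinli2.01} it has a $C^1$ solution on every $[t_0,t_0+a]$, and hence on $[t_0,\infty)$. Set $D=Z-K$. Then
\[
\dot D=AD+DA^*+KSK,\qquad D(t_0)=0.
\]
Since $KSK=KS K^*\ge0$ (using $K=K^*$), the same variation-of-constants argument, now with the evolution family of $A$, shows $D(t)\ge0$. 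Hence $0\le K(t)\le Z(t)$, so $\|K(t)\|\le\|Z(t)\|$, which is bounded on $[t_0,\tau)$ whenever $\tau<\infty$. This contradicts blow-up. Therefore $\tau=\infty$, and $K\in C^1([t_0,\infty),\mathcal B(\mathcal H))$ with $K(t)\ge0$.

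I expect the main obstacle to be the bookkeeping in the continuation step. One must make sure that the local existence time $h$ depends only on an a priori norm bound, so that the order estimate $0\le K\le Z$ really rules out blow-up. One must also make sure that Lemma~\ref{yinli6} and the variation-of-constants identity hold on the half-open maximal interval, applied on each compact subinterval. The positivity conclusions themselves are routine once the conjugation formula is in place.
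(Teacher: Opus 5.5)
Your proof is correct, but it does not follow the paper's route. The paper gives no argument for this lemma and imports it from Lawson--Lim, whose proof is built on the symplectic semigroup. In that approach, the fundamental solution of the linear Hamiltonian system attached to the Riccati equation is shown to lie in a semigroup of symplectic operators. This semigroup acts on the cone of positive operators by linear fractional transformations, and $K(t)$ is the image of $K_0$ under the action. Global existence and positivity then come out together, because the action is defined on the whole positive cone.

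You instead combine local Picard existence with an a priori sandwich $0\le K(t)\le Z(t)$. Both halves come from the same conjugation (variation-of-constants) identity. The lower bound uses $M=A-\tfrac12KS$ and needs only the symmetry of $K$ and $S$, not the sign of $S$. The upper bound applies the evolution family of $A$ to $Z-K$, and this is where $S\ge0$ enters, through $KSK\ge0$.

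Your route is self-contained and needs only Lemma~\ref{yinli2.01} and Lemma~\ref{yinli6} from the paper. It gives uniqueness and an explicit Lyapunov bound for free. Since nothing in it depends on the scalars being real, it runs verbatim on a complex Hilbert space. That would make the realification and complex-linearity argument of Lemma~\ref{fude} unnecessary. What the Lawson--Lim approach buys in exchange is a structural linear-fractional (graph) description of the Riccati flow. This is essentially the relation $\operatorname{Graph}(-Y(t))=e^{-tH}(\mathcal H\oplus\{0\})$ that the paper re-derives by hand in Step~3 of Proposition~\ref{yinli7}.

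One detail to state explicitly when you write this up: on a real Hilbert space, $\langle Kx,x\rangle\ge0$ does not force symmetry. So the comparison $\|K(t)\|\le\|Z(t)\|$ should cite $K=K^*$ (from uniqueness) and $Z-K=(Z-K)^*$ (from your integral formula). With both in hand, $\|K\|=\sup_{\|x\|=1}\langle Kx,x\rangle\le\sup_{\|x\|=1}\langle Zx,x\rangle=\|Z\|$.
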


To apply Lemma~\ref{yinli8} in our setting, we establish its
complex Hilbert space version by realification.

\begin{lemma}\label{fude}
	Let \(\mathcal H\) be a complex Hilbert space, and let
	\[
	A,R,S:[t_0,\infty)\longrightarrow\mathcal B(\mathcal H)
	\]
	be continuous in the operator norm.
	Suppose that \(R(t),S(t)\ge0\) for all \(t\ge t_0\),
	and let \(K_0\in\mathcal B(\mathcal H)\) satisfy \(K_0\ge0\).
	Then the initial value problem
	\[
	Y'(t)=R(t)+A(t)Y(t)+Y(t)A(t)^*-Y(t)S(t)Y(t),
	\qquad Y(t_0)=K_0,
	\]
	admits a positive self-adjoint solution
	\[
	Y\in C^1\bigl([t_0,\infty),\mathcal B(\mathcal H)\bigr).
	\]
\end{lemma}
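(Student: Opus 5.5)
We obtain Lemma~\ref{fude} from Lemma~\ref{yinli8} by realification. Let \(\mathcal H_{\mathbb R}\) be \(\mathcal H\) regarded as a real vector space with inner product
\[
(x,y)_{\mathbb R}=\operatorname{Re}\langle x,y\rangle .
\]
This is a real Hilbert space: its norm coincides with the original norm, so completeness is inherited. Every \(T\in\mathcal B(\mathcal H)\) is also a bounded real-linear operator \(T_{\mathbb R}\) on \(\mathcal H_{\mathbb R}\) with \(\|T_{\mathbb R}\|=\|T\|\). The map \(T\mapsto T_{\mathbb R}\) is an isometric real-algebra homomorphism from \(\mathcal B(\mathcal H)\) into \(\mathcal B(\mathcal H_{\mathbb R})\). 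In particular, norm-continuity of \(A,R,S\) transfers to \(A_{\mathbb R},R_{\mathbb R},S_{\mathbb R}\).

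The key compatibility facts, checked in this order, are as follows.
\begin{enumerate}
\item The real adjoint of \(T_{\mathbb R}\) is \((T^*)_{\mathbb R}\). This follows from \(\operatorname{Re}\langle Tx,y\rangle=\operatorname{Re}\langle x,T^*y\rangle\).
\item If \(T\ge0\) in the complex sense, then \(T_{\mathbb R}\) is real self-adjoint by (1). It is also positive, since \((T_{\mathbb R}x,x)_{\mathbb R}=\operatorname{Re}\langle Tx,x\rangle=\langle Tx,x\rangle\ge0\).
\item Conversely, let \(Z\in\mathcal B(\mathcal H_{\mathbb R})\) commute with the real operator \(\mathcal J\) of multiplication by \(i\). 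Then \(Z\) is complex-linear. If moreover \(Z\) is real self-adjoint and positive, then \(Z=Y_{\mathbb R}\) for some complex operator \(Y\) with \(Y=Y^*\) and \(\langle Yx,x\rangle=(Zx,x)_{\mathbb R}\ge0\). Here one uses that a complex operator is self-adjoint if and only if \(\langle Yx,x\rangle\) is real for all \(x\), and that \(\operatorname{Im}\langle Yx,x\rangle=-\operatorname{Re}\langle Y(ix),x\rangle\) is determined by real data.
\end{enumerate}

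By (1) and (2), applying Lemma~\ref{yinli8} to \(A_{\mathbb R},R_{\mathbb R},S_{\mathbb R}\) and \((K_0)_{\mathbb R}\) gives a \(C^1\) solution \(K(t)\ge0\) on \(\mathcal H_{\mathbb R}\) of the realified equation. The main obstacle is that Lemma~\ref{yinli8} asserts only existence, so a priori \(K(t)\) need not be complex-linear. We would handle this with local uniqueness.

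\emph{Uniqueness.} The right-hand side
\[
F(t,K)=R+AK+KA^*-KSK
\]
is locally Lipschitz in \(K\), uniformly for \(t\) in compact intervals. A Gronwall argument, like the one in the proof of Lemma~\ref{yinli6}, therefore shows that \(C^1\) solutions with the same initial value coincide on any common interval.

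\emph{Complex-linearity.} Set \(\tilde K(t)=-\mathcal J K(t)\mathcal J\), which equals \(\mathcal J^{-1}K(t)\mathcal J\). Since \(A_{\mathbb R},R_{\mathbb R},S_{\mathbb R}\) and \((A^*)_{\mathbb R}\) commute with \(\mathcal J\), the operator \(\tilde K\) solves the same equation. It has the same initial value, because \((K_0)_{\mathbb R}\) commutes with \(\mathcal J\). By uniqueness, \(\tilde K=K\), that is, \(K(t)\mathcal J=\mathcal J K(t)\) for all \(t\).

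By (3), \(K(t)=Y(t)_{\mathbb R}\) with \(Y(t)\ge0\) on \(\mathcal H\). The map \(T\mapsto T_{\mathbb R}\) is isometric and respects products and adjoints. Hence \(Y\in C^1([t_0,\infty),\mathcal B(\mathcal H))\), and the realified equation translates back to exactly the stated complex Riccati equation. This proves Lemma~\ref{fude}.
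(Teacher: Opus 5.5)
Your proposal is correct and follows the paper's route: realify $\mathcal H$, apply Lemma~\ref{yinli8}, and then obtain complex-linearity of the real solution from a Gronwall argument. The only difference is packaging. You invoke uniqueness for the nonlinear Riccati problem to identify $K$ with $\mathcal J^{-1}K\mathcal J$, whereas the paper derives the linear equation $Z'=(A-YS)Z+Z(A^*-SY)$, $Z(t_0)=0$, for the commutator $Z=YJ-JY$; since $K-\mathcal J^{-1}K\mathcal J=-\mathcal J^{-1}Z$, the two estimates are the same.
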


\begin{proof}
	Regard \(\mathcal H\) as a real vector space, denoted by
	\(\mathcal H_{\mathbb R}\), equipped with the real inner product
	\[
	\langle x,y\rangle_{\mathbb R}
	=\operatorname{Re}\langle x,y\rangle.
	\]
	A direct calculation gives
	\[
	\|x\|_{\mathcal H_{\mathbb R}}^2
	=\langle x,x\rangle_{\mathbb R}
	=\langle x,x\rangle
	=\|x\|_{\mathcal H}^2.
	\]
	Thus, \(\mathcal H_{\mathbb R}\) is a real Hilbert space.
	Moreover,
	\[
	\langle Ax,y\rangle_{\mathbb R}
	=\operatorname{Re}\langle Ax,y\rangle
	=\operatorname{Re}\langle x,A^*y\rangle
	=\langle x,A^*y\rangle_{\mathbb R}.
	\]
	Hence the complex adjoint \(A^*\) is also the real adjoint
	of \(A\) on \(\mathcal H_{\mathbb R}\).
	Furthermore,
	\[
	\langle Rx,x\rangle_{\mathbb R}
	=\operatorname{Re}\langle Rx,x\rangle
	=\langle Rx,x\rangle\ge0.
	\]
	Consequently, \(R(t),S(t)\), and \(K_0\) are also positive
	operators on \(\mathcal H_{\mathbb R}\).
	Applying Lemma~\ref{yinli8} on \(\mathcal H_{\mathbb R}\),
	we obtain a positive self-adjoint solution \(Y(t)\).
	
	Define
	\[
	J:\mathcal H_{\mathbb R}\longrightarrow\mathcal H_{\mathbb R},
	\qquad Jx=ix.
	\]
	To show that \(Y(t)\) is complex-linear, it suffices to prove that
	\[
	JY(t)=Y(t)J.
	\]
	Set
	\[
	Z(t)=Y(t)J-JY(t).
	\]
	We shall prove that \(Z(t)=0\).
	Since \(A,R,S\), and \(K_0\) are complex-linear, we have
	\[
	AJ=JA,\qquad
	A^*J=JA^*,\qquad
	RJ=JR,\qquad
	SJ=JS,\qquad
	K_0J=JK_0.
	\]
	Direct calculation yields
	\[
	Z'(t)=Y'(t)J-JY'(t),
	\]
	together with
	\[
	(AY)J-J(AY)
	=AYJ-AJY
	=A(YJ-JY)
	=AZ,
	\]
	\[
	(YA^*)J-J(YA^*)
	=YJA^*-JYA^*
	=(YJ-JY)A^*
	=ZA^*,
	\]
	and
	\[
	\begin{aligned}
		(YSY)J-J(YSY)
		&=YSYJ-YSJY+YSJY-JYSY\\
		&=YS(YJ-JY)+(YJ-JY)SY\\
		&=YSZ+ZSY,
	\end{aligned}
	\]
	where we have used \(SJ=JS\).
	
	It follows that
	\[
	Z'
	=AZ+ZA^*-YSZ-ZSY
	=(A-YS)Z+Z(A^*-SY),
	\]
	with initial condition
	\[
	Z(t_0)=K_0J-JK_0=0.
	\]
	Write
	\[
	C(t)=A(t)-Y(t)S(t),
	\qquad
	D(t)=A(t)^*-S(t)Y(t).
	\]
	Then \(C(t)\) and \(D(t)\) are continuous in the operator norm,
	and
	\[
	Z'(t)=C(t)Z(t)+Z(t)D(t),
	\qquad Z(t_0)=0.
	\]
	Fix a finite interval \([t_0,t_1]\) and set
	\[
	M=\sup_{t_0\le s\le t_1}
	\bigl(\|C(s)\|+\|D(s)\|\bigr)<\infty.
	\]
	We have
	\[
	Z(t)=\int_{t_0}^t
	\bigl(C(s)Z(s)+Z(s)D(s)\bigr)\,ds,
	\]
	and hence
	\[
	\|Z(t)\|
	\le M\int_{t_0}^t\|Z(s)\|\,ds.
	\]
	Define
	\[
	f(t)=\int_{t_0}^t\|Z(s)\|\,ds.
	\]
	Then
	\[
	f(t)\ge0,\qquad
	f(t_0)=0,\qquad
	f'(t)=\|Z(t)\|\le Mf(t).
	\]
	Therefore,
	\[
	\frac{d}{dt}\Bigl(e^{-M(t-t_0)}f(t)\Bigr)
	=e^{-M(t-t_0)}\bigl(f'(t)-Mf(t)\bigr)\le0.
	\]
	Since \(e^{-M(t-t_0)}f(t)\ge0\) and its value at \(t=t_0\)
	is zero, we conclude that
	\[
	f(t)=0,\qquad t\in[t_0,t_1].
	\]
	Thus,
	\[
	Z(t)=0,\qquad t_0\le t\le t_1.
	\]
	As \(t_1>t_0\) was arbitrary, it follows that
	\[
	Z(t)=0,\qquad t\ge t_0.
	\]
	Hence \(Y(t)\) is complex-linear.
	Since it is self-adjoint on \(\mathcal H_{\mathbb R}\),
	it is also self-adjoint on the complex Hilbert space
	\(\mathcal H\). Consequently,
	\[
	\langle Y(t)x,x\rangle
	=\langle Y(t)x,x\rangle_{\mathbb R}\ge0,
	\qquad x\in\mathcal H.
	\]
	Thus, \(Y(t)\) is a complex-linear positive operator.
\end{proof}

We now apply these results to construct an invariant subspace
for a Hamiltonian operator matrix.

\begin{proposition}\label{yinli7}
Let \(K,P,Q\in\mathcal B(\mathcal H)\), with \(P,Q\ge0\). Then
	\[
	H=\begin{pmatrix}K&Q\\ P&-K^*\end{pmatrix}
	\]
	has a nontrivial invariant subspace.
\end{proposition}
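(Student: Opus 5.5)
We follow the route sketched in the introduction. Apply Lemma~\ref{fude} with $t_0=0$, $A(t)\equiv K^*$, $R(t)\equiv P$, $S(t)\equiv Q$ and $K_0=0$. This gives a positive $C^1$ solution of
\[
Y'(t)=K^*Y(t)+Y(t)K-Y(t)QY(t)+P,\qquad Y(0)=0,\qquad t\ge0.
\]

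\textbf{Graph subspaces.} First we identify the graphs of $-Y(t)$ with the Hamiltonian flow. Put $M(t)=K-QY(t)$ and let $U(t)$ solve $U'=-M(t)U$, $U(0)=I$; by Lemma~\ref{yinli6}, $U(t)$ is invertible. Set
\[
X(t)=U(t),\qquad W(t)=-Y(t)U(t).
\]
A direct computation using the Riccati equation gives
\[
\frac{d}{dt}\begin{pmatrix}X\\W\end{pmatrix}=-H\begin{pmatrix}X\\W\end{pmatrix}.
\]
Indeed, $-KX-QW=-(K-QY)U=U'$, and the second component follows by expanding $-(YU)'$. By uniqueness (Lemma~\ref{yinli2.01} on $\mathcal B(\mathcal H)$), $\binom{X}{W}=e^{-tH}\binom{I}{0}$. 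Since $U(t)$ is invertible, this yields
\[
\mathrm{Graph}(-Y(t))=e^{-tH}(\mathcal H\oplus\{0\}).
\]

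\textbf{Monotonicity and the limit.} Next we show $Y(s)\le Y(t)$ for $s\le t$. We plan to argue this via uniqueness and comparison. The difference $D(h)=Y(t+h)-Y(t)$ solves a Riccati-type equation of the form
\[
D'=\tilde K^*D+D\tilde K-DQD
\]
with $\tilde K=K-QY(t)$. Alternatively, $Z(t)=Y'(t)$ satisfies $Z'=(K-QY)^*Z+Z(K-QY)$, so $Z(t)=U(t)^{-*}PU(t)^{-1}\ge0$, using $Z(0)=P$ and the transport formula from Lemma~\ref{yinli6}. The second route is cleaner and is the one we will use.

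Monotonicity makes $R_t=(I+Y(t))^{-1}$ a decreasing family of positive contractions, so $R_t\to R$ strongly with $0\le R\le I$. Then $\mathrm{Graph}(-Y(t))$ equals
\[
\mathcal M_t=\left\{\begin{pmatrix}R_tx\\-(I-R_t)x\end{pmatrix}:x\in\mathcal H\right\},
\]
and we define $\mathcal M$ by the same formula with $R$ in place of $R_t$.

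\textbf{Invariance.} This is the main obstacle. The plan is to show that $\mathcal M$ is closed, that it is $e^{-sH}$-invariant for each $s\ge0$, and then to differentiate at $s=0$ to get $H$-invariance.

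Closedness follows because $x\mapsto(Rx,-(I-R)x)$ is bounded below: $\|Rx\|+\|(I-R)x\|\ge\|x\|$.

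For $e^{-sH}$-invariance, note that $e^{-sH}\mathcal M_t=\mathcal M_{t+s}$. For fixed $x$, we have $e^{-sH}\binom{R_tx}{-(I-R_t)x}\in\mathcal M_{t+s}$, so it equals $\binom{R_{t+s}x_t}{-(I-R_{t+s})x_t}$, where $x_t$ is obtained by adding the two components. Hence $x_t$ depends continuously on the vector, namely $x_t=\Pi\, e^{-sH}(\cdot)$, and $\|x_t\|$ is bounded. Letting $t\to\infty$, the left side converges to $e^{-sH}\binom{Rx}{-(I-R)x}$, with $x_t\to x_\infty=\Pi e^{-sH}\binom{Rx}{-(I-R)x}$ in norm. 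Since $R_{t+s}$ is uniformly bounded and converges strongly, $R_{t+s}x_t\to Rx_\infty$. Thus $e^{-sH}\mathcal M\subseteq\mathcal M$. Since $\mathcal M$ is closed and $(I-e^{-sH})/s\to H$ in norm, we conclude $H\mathcal M\subseteq\mathcal M$.

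\textbf{Nontriviality.} We have $\mathcal M\ne0$ since it is isomorphic to $\mathcal H$. To see $\mathcal M\ne\mathcal H\oplus\mathcal H$, note that $\binom{u}{v}\in\mathcal M$ forces $x=u-v$, which is a proper constraint. For example, $(0,v)\in\mathcal M$ requires $Rv=0$ and $v=x$ with $-(I-R)x=v$, i.e.\ $x=-x$, so $v=0$.

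The delicate points are the strong-limit passage, handled by the explicit bounded "coordinate map" $\Pi(u,v)=u-v$, and the positivity of $Y'$, which is what makes the family $R_t$ monotone in the first place.
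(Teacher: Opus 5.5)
Your overall route is the paper's, and your two small variations are sound. You prove closedness because $\Pi V=I$ makes $V$ bounded below, where the paper writes $\mathcal M=\ker W$ with $W(u,v)=(I-R)u+Rv$. You pass to the limit through $e^{-sH}V_tx=V_{t+s}\,\Pi e^{-sH}V_tx$, where the paper uses $W_{t+s}e^{-sH}V_t=0$. Your version even gives directly $e^{-sH}Vx=V\,\Pi e^{-sH}Vx$.

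The genuine gap is in the proof that $\mathcal M\neq\mathcal H\oplus\mathcal H$. If $(0,v)=(Rx,-(I-R)x)$, then $x=\Pi(0,v)=-v$, not $x=v$. With the correct sign, the two conditions reduce to $Rv=0$, and nothing forces $v=0$. In fact $(0,v)\in\mathcal M$ exactly when $v\in\ker R$, and this kernel can be all of $\mathcal H$. For $K=0$, $Q=0$, $P=I$ one has $Y(t)=tI$ and $R_t=(1+t)^{-1}I\to0$, so $\mathcal M=\{0\}\oplus\mathcal H$, which contains every $(0,v)$. The other coordinate axis fails too: $(u,0)\in\mathcal M$ iff $Ru=u$, and $P=0$ gives $Y\equiv0$, $R=I$, $\mathcal M=\mathcal H\oplus\{0\}$.

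Your earlier remark that $x=u-v$ is ``a proper constraint'' does not close the gap. The relation $x=u-v$ only recovers the parameter; the actual membership condition is $u=R(u-v)$, i.e.\ $(I-R)u+Rv=0$. The repair is to test a diagonal vector. If $(x,x)=Vy$, then $y=\Pi(x,x)=0$, so $x=0$. Hence $(x,x)\notin\mathcal M$ for $x\neq0$, which is exactly the paper's observation $W(x,x)=x$.
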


\begin{proof}

Consider the equation
\begin{equation}\label{eq:riccati}
	Y'(t)=K^*Y(t)+Y(t)K-Y(t)QY(t)+P,
	\qquad Y(0)=0.
\end{equation}

\paragraph{Step 1: Solving the Riccati equation}

Applying Lemma~\ref{yinli8} and Lemma~\ref{fude} with
\(A(t)=K^*\), \(R(t)=P\), \(S(t)=Q\), and \(K_0=0\),
we obtain a positive self-adjoint solution \(Y(t)\)
of \eqref{eq:riccati} on \([0,\infty)\).

\paragraph{Step 2: Prove that \(Y(t)\) is monotonically increasing.}

Set \(D(t)=Y'(t)\). Differentiating \eqref{eq:riccati} gives
\[
	D'(t)
	=K^*D(t)+D(t)K-D(t)QY(t)-Y(t)QD(t)
	=(K-QY(t))^*D(t)+D(t)(K-QY(t)).
\]
By \eqref{eq:riccati} and \(Y(0)=0\), we have
\[
D(0)=P\ge0.
\]
Set \(N(t)=K-QY(t)\). By Lemma~\ref{yinli6},
there exists an invertible solution \(E(t)\) satisfying
\[
E'(t)=-N(t)E(t),\qquad E(0)=I.
\]
A direct calculation yields
\[	\frac{d}{dt}\bigl(E(t)^*D(t)E(t)\bigr)
	=(E(t)^*)'D(t)E(t)
	+E(t)^*D'(t)E(t)
+E(t)^*D(t)E'(t).
\]
Since
\[
D'(t)=N(t)^*D(t)+D(t)N(t)
\]
and
\[
(E(t)^*)'=(E'(t))^*=-E(t)^*N(t)^*,
\]
it follows that
\[
\frac{d}{dt}\bigl(E(t)^*D(t)E(t)\bigr)=0.
\]
Thus, \(E(t)^*D(t)E(t)\) is constant. Using \(E(0)=I\)
and \(D(0)=P\), we obtain
\[
E(t)^*D(t)E(t)=P,
\]
and hence
\[
D(t)=(E(t)^{-1})^*PE(t)^{-1}\ge0.
\]
Therefore, for any \(t\ge s\ge0\),
\[
Y(t)-Y(s)=\int_s^t D(u)\,du\ge0.
\]
Equivalently,
\[
0\le Y(s)\le Y(t),\qquad t\ge s\ge0.
\]
Thus, \(Y(t)\) is nondecreasing in the operator order.

\paragraph{Step 3: The evolution of the graphs under the operator $e^{-tH}$.}

Write
\[
G_t=\operatorname{Graph}(-Y(t))
=\left\{
\begin{pmatrix}
	x\\ -Y(t)x
\end{pmatrix}
\ \middle|\ x\in\mathcal H
\right\}.\tag{3.2}
\]
Let \(G(t)\) be the invertible solution of
\[
G'(t)=(-K+QY(t))G(t),\qquad G(0)=I.
\]
A direct calculation gives
\[
	(-Y(t)G(t))'
	=-Y'(t)G(t)-Y(t)G'(t)
	=-K^*Y(t)G(t)-PG(t).
\]
Recall that
\[
H=
\begin{pmatrix}
	K&Q\\
	P&-K^*
\end{pmatrix}.
\]
We then have
\[
\frac{d}{dt}
\begin{pmatrix}
	G(t)\\ -Y(t)G(t)
\end{pmatrix}
=-H
\begin{pmatrix}
	G(t)\\ -Y(t)G(t)
\end{pmatrix},
\]
with initial value
\[\begin{pmatrix}
	I\\0
\end{pmatrix}.
\]
Consequently,
\[
\begin{pmatrix}
	G(t)\\ -Y(t)G(t)
\end{pmatrix}
=e^{-tH}
\begin{pmatrix}
	I\\0
\end{pmatrix}.
\]
Since \(G(t)\) is invertible, it follows that
\[
\left\{
\begin{pmatrix}
	x\\ -Y(t)x
\end{pmatrix}
\ \middle|\ x\in\mathcal H
\right\}
=e^{-tH}(\mathcal H\oplus\{0\}).
\]
Thus,
\[
G_t=e^{-tH}(\mathcal H\oplus\{0\}).
\]
In particular, for any \(s,t\ge0\),
\[
G_{t+s}
=e^{-(t+s)H}(\mathcal H\oplus\{0\})
=e^{-sH}G_t.\tag{3.3}\label{3.3}
\]
Hence the graphs \(G_t\) evolve under \(e^{-tH}\).

\paragraph{Step 4: Compute the limit of \((I+Y(t))^{-1}\) in the strong operator topology.}

Set
\[
R_t=(I+Y(t))^{-1}.
\]
Since \(Y(t)\ge0\), we have \(I+Y(t)\ge I\), and therefore
\[
0\le R_t\le I.
\]
Moreover, the monotonicity of \(Y(t)\) implies that
\(R_t\) is nonincreasing in the operator order.

For each \(x\in\mathcal H\), the scalar function
\(\langle R_tx,x\rangle\) is nonnegative and nonincreasing,
so its limit exists as \(t\to\infty\). For \(t\ge s\),
we have
\[
0\le R_s-R_t\le I.
\]
Hence
\[
\begin{aligned}
	\|(R_s-R_t)x\|^2
	&=\langle (R_s-R_t)^2x,x\rangle\\
	&\le\langle (R_s-R_t)x,x\rangle\\
	&=\langle R_sx,x\rangle-\langle R_tx,x\rangle.
\end{aligned}
\]
To justify the inequality, set \(D=R_s-R_t\).
Then \(0\le D\le I\), and
\[
\langle (D-D^2)x,x\rangle
=\langle D^{1/2}(I-D)D^{1/2}x,x\rangle\ge0.
\]
Thus, \(D^2\le D\).

Since the right-hand side tends to zero as \(s,t\to\infty\),
the family \(R_tx\) satisfies the Cauchy criterion as
\(t\to\infty\). Define
\[
Rx=\lim_{t\to\infty}R_tx.
\]
The bound \(\|R_t\|\le1\) implies that \(R\) is a bounded
linear operator with \(\|R\|\le1\), and
\[
R_t\xrightarrow{\mathrm{SOT}}R.
\]
Furthermore, passing to the limit in \(0\le R_t\le I\) gives
\[
0\le R\le I.
\]

\paragraph{Step 5: Construction of a nontrivial closed subspace.}

Define
\[
Vx=
\begin{pmatrix}
	Rx\\
	-(I-R)x
\end{pmatrix},
\qquad
W(u,v)=(I-R)u+Rv,
\qquad x,u,v\in\mathcal H,
\]
and set
\[
M=\operatorname{ran}V.
\]
We claim that
\[
\operatorname{ran}V=\ker W.
\]
Indeed,
\[
WVx
=W\bigl(Rx,-(I-R)x\bigr)
=(I-R)Rx-R(I-R)x=0,
\]
so \(\operatorname{ran}V\subseteq\ker W\).

Conversely, let \((u,v)\in\ker W\). Then
\[
(I-R)u+Rv=0,
\]
and hence
\[
Rv=-(I-R)u.
\]
It follows that
\[
u=R(u-v).
\]
Taking \(x=u-v\), we obtain \(Rx=u\) and
\[
-(I-R)x=-x+Rx=-(u-v)+u=v.
\]
Thus, \((u,v)=Vx\), proving that
\(\ker W\subseteq\operatorname{ran}V\). Therefore,
\[
\operatorname{ran}V=\ker W.
\]
Since \(W\) is bounded, \(\ker W\) is closed, and hence
\(M=\operatorname{ran}V\) is a closed subspace.

For any \(x\ne0\), the identity
\[
x=Rx+(I-R)x
\]
shows that \(Rx\) and \((I-R)x\) cannot both vanish.
Thus, \(Vx\ne0\), and consequently
\[
M\ne\{0\}.
\]
On the other hand, for any \(x\ne0\),
\[
W(x,x)=(I-R)x+Rx=x\ne0.
\]
Hence \((x,x)\notin\ker W\), so
\[
\ker W\ne\mathcal H\oplus\mathcal H.
\]
Equivalently,
\[
M\ne\mathcal H\oplus\mathcal H.
\]
Thus, \(M\) is a nontrivial closed subspace.

\paragraph{Step 6: Prove that \(HM\subset M\).}

Define
\[
V_tx=
\begin{pmatrix}
	R_tx\\
	-(I-R_t)x
\end{pmatrix},
\qquad
W_t(u,v)=(I-R_t)u+R_tv.
\]
Since \(R_t=(I+Y(t))^{-1}\), we have
\[
Y(t)R_t=I-R_t.
\]
As \(R_t\) is invertible,
\[
	\operatorname{ran}V_t
	=\left\{
	\begin{pmatrix}
		R_tx\\
		-Y(t)R_tx
	\end{pmatrix}
	\ \middle|\ x\in\mathcal H
	\right\}
	=\operatorname{Graph}(-Y(t))
	=G_t.
\]
The same calculation as in Step~5 gives
\[
\operatorname{ran}V_t=\ker W_t.
\]
Fix \(s\ge0\). Then, by \eqref{3.3}, we obtain
\[
e^{-sH}\operatorname{ran}V_t
=e^{-sH}G_t
=G_{t+s}
=\operatorname{ran}V_{t+s}
=\ker W_{t+s}.
\]
It follows that
\[
W_{t+s}e^{-sH}V_t=0.
\]
Since
\[
V_t\xrightarrow{\mathrm{SOT}}V,
\qquad
W_t\xrightarrow{\mathrm{SOT}}W,
\]
and
\[
\|W_{t+s}\|\le\sqrt2,
\]
we obtain, for every \(x\in\mathcal H\),
\[
\begin{aligned}
	&\bigl\|W_{t+s}e^{-sH}V_tx-We^{-sH}Vx\bigr\|\\
	&\quad\le
	\bigl\|W_{t+s}\bigl(e^{-sH}V_tx-e^{-sH}Vx\bigr)\bigr\|
	+\bigl\|(W_{t+s}-W)e^{-sH}Vx\bigr\|\\
	&\quad\le
	\sqrt2\,\bigl\|e^{-sH}V_tx-e^{-sH}Vx\bigr\|
	+\bigl\|(W_{t+s}-W)e^{-sH}Vx\bigr\|
	\longrightarrow0
\end{aligned}
\]
as \(t\to\infty\). Therefore,
\[
We^{-sH}V=0,
\]
which implies
\[
e^{-sH}V\mathcal H
\subseteq\ker W
=\operatorname{ran}V
=M.
\]
Since \(V\mathcal H=M\), we conclude that
\[
e^{-sH}M\subseteq M,\qquad s\ge0.
\]
For any \(z\in M\) and \(s>0\),
\[
\frac{e^{-sH}z-z}{s}\in M.
\]
Moreover,
\[
\lim_{s\to 0^{+}}\frac{e^{-sH}z-z}{s}=-Hz.
\]
Since \(M\) is closed, it follows that \(Hz\in M\).
Thus,
\[
HM\subseteq M.
\]
\end{proof}

We now deduce the main theorem from Proposition~\ref{yinli7}.

\begin{proof}[Proof of the Main Theorem]
	
Choose a unit vector \(e\in\ker A^*\), and set
\[
d=\langle Ce,e\rangle.
\]
Since \(C>0\), we have \(d>0\). Hence $T$ is invertible.
Define
\[
L=(A^*A)^{-1}A^*
\left(I-\frac{Ce\otimes e}{d}\right),
\qquad
P=\frac{e\otimes e}{d},
\qquad
Q=LCL^*.
\]
A direct calculation gives
\[
LA=I,\qquad
AL=I-\frac{Ce\otimes e}{d},\qquad
LCe=0.
\]
Set
\[
F=
\begin{pmatrix}
	L&-Q\\
	P&L^*
\end{pmatrix}.
\]
Using the identities above and their adjoints,
a direct multiplication gives
\[
TF=FT=I.
\]
Thus, 
$
T^{-1}=F.
$
Moreover, \(P,Q\ge0\).
Set
\[
S=
\begin{pmatrix}
	iI&0\\
	0&I
\end{pmatrix},
\]
and let \(K=-iL\). Then
\[
S^{-1}(-iT^{-1})S=
\begin{pmatrix}
	K&Q\\
	P&-K^*
\end{pmatrix}.
\]
By Proposition~\ref{yinli7}, the operator
\(S^{-1}(-iT^{-1})S\) is intransitive.
Hence \(T^{-1}\) is intransitive.
\end{proof}

	\section{Declaration of generative AI and AI-assisted technologies in the manuscript preparation process}

During the preparation of this work we used GPT-6 Astra to assist with literature parsing, mathematical text formatting and English‑language polishing. After using this tool, we reviewed and edited the content as needed and take full responsibility for the content of the published article.

\printbibliography 
\end{document}